\setlist{nosep}
\theoremstyle{definition}
\newtheorem{defin}{Definition}[section]
\theoremstyle{plain}
\newtheorem{theo}[defin]{Theorem}
\newtheorem{cor}[defin]{Corollary}
\theoremstyle{definition}
\newtheorem{rem}[defin]{Remark}
\newtheorem{rems}[defin]{Remarks}
\renewcommand{\H}{\mathcal{H}}
\newcommand{\B}{\mathcal{B}}
\newcommand{\G}{\mathcal{G}}
\newcommand{\n}[1]{\|#1\|}
\newcommand{\nor}{\|\cdot\|}
\renewcommand{\l}{\langle}
\renewcommand{\r}{\rangle}
\newcommand{\N}{\mathbb{N}}
\newcommand{\Z}{\mathbb{Z}}
\newcommand{\R}{\mathbb{R}}
\newcommand{\Q}{\mathbb{Q}}
\newcommand{\C}{\mathbb{C}}
\newcommand{\pint}{\l\cdot,\cdot\r}
\newcommand{\pin}[2]{\l#1 , #2\r}
\newcommand{\no}{\noindent}
\newcommand{\ol}{\overline}
\newcommand{\mez}{\frac{1}{2}}
\numberwithin{equation}{section}
\renewcommand\labelenumi{\emph{(\roman{enumi})}}
\renewcommand\theenumi\labelenumi
\fillast \fontsize{12}{15}\scshape}{\thesection.}{0.8em}{}
\fillast \fontsize{11}{12}\scshape}{\thesubsection.}{0.8em}{}
\begin{document}
	
\thispagestyle{plain}

\begin{center}
	\large
	{\uppercase{\bf Orbits of bounded bijective operators \\and Gabor frames}}\\
	\vspace*{0.4cm}
	{\scshape{Rosario Corso}}
\end{center}

\normalsize 
\vspace*{0.2cm}	

\small 

\begin{minipage}{11.8cm}
	{\scshape Abstract.} 
	This paper is a contribution to frame theory. Frames in a Hilbert space are generalizations of orthonormal bases. In particular, Gabor frames of $L^2(\mathbb{R})$, which are made of translations and modulations of one or more windows, are often used in applications. 
	
	More precisely, the paper deals with a question posed in the last years by Christensen and Hasannasab about the existence of overcomplete Gabor frames, with some ordering over $\Z$, which are orbits of bounded operators on $L^2(\R)$. 
	Two classes of overcomplete Gabor frames which cannot be ordered over $\Z$ and represented by orbits of operators in $GL(L^2(\R))$ are given. 
	
	Some results about operator representation are stated in a general context for arbitrary frames, covering also certain wavelet frames. 
\end{minipage}

\vspace*{.5cm}

\begin{minipage}{11.8cm}
	{\scshape Keywords:} operator representation of frames, Gabor frames, bounded bijective operators.
\end{minipage}

\vspace*{.5cm}

\begin{minipage}{11.8cm}
	{\scshape MSC (2010):} 42C15, 94A20. 
\end{minipage}

\normalsize

\section{Introduction}

Let $\H$ be a separable Hilbert space with inner product $\pint$ and norm $\nor$. A sequence $\{f_n\}_{n\in I}\subset \H$, indexed by a countable set $I$, is called a {\it frame} of $\H$ if there exist $0<A,B$ such that 
$$
A\n{f}^2\leq \sum_{n\in I} |\pin{f}{f_n}|^2\leq B \n{f}^2, \qquad \forall f\in \H.
$$
Given a frame $\{f_n\}_{n\in I}$ of $\H$ there exists (at least) a frame $\{h_n\}_{n\in I}$ of $\H$ such that the following formulas hold
$$f=\sum_{n\in I} \pin{f}{h_n}f_n=\sum_{n\in I} \pin{f}{f_n}h_n \qquad \forall f\in \H.$$ 
Frames are then generalizations of orthonormal bases and they find many engineering applications (for an introduction about frames one could refer to \cite{Chris,Groc,Heil}). 

A new aspect in frame theory appeared recently is the {\it operator representation} of a frame $\{f_n\}_{n\in I}$ (where $I=\N_0:=\N \cup \{0\}$ or $I=\Z$) of $\H$, that is the possibility to write $\{f_n\}_{n\in I}$ as {\it orbit over $I$} 
\begin{equation}
\label{repr_intro}
\{T^nf_0\}_{n\in I},  
\end{equation}
of some operator $T$ acting on $\H$. This aspect has been intensively studied by Christensen et al. \cite{Chris_o17_sampta,Chris_o17_Z,Chris_o18_open,Chris_o18_char,Chris_o18} and it is connected to the so-called {\it dynamical sampling} (see also \cite{ACCP,ACMP,Aldroubi_orig,Aldroubi_proc,Cabrelli_finite,Cabrelli_sampta,CMS,Philipp}). A variation of the problem was also given in \cite{Chris_appr,Chris_o19_sampta}.

First of all we need to precise the meaning of \eqref{repr_intro} for $I=\Z$: $T$ is an operator whose restriction to span$(\{f_n\}_{n\in \Z})$ is bijective and $T^n f_0$ is the iteration of this restriction. Clearly, there is no ambiguity if $T$ belongs in  $GL(\H)\subset \B(\H)$, where $\B(\H)$ is the set of bounded operators $T:\H\to \H$ and $GL(\H)$ is the subset of operators $T\in \B(\H)$ such that $T^{-1}$ exists and $T^{-1}\in \B(\H)$. We note that the expression \eqref{repr_intro} for $I=\Z$ and $T\in GL(\H)$ is quite familiar in frame theory. Indeed, shift-invariant sequences $\{T_{na} \phi\}_{n\in \Z}:=\{\phi(\cdot -na)\}_{n\in \Z}$ with $\phi\in L^2(\R)$ have this expression (and the translation operator $T_a$ is even unitary).

In general, the existence of a representation \eqref{repr_intro} is equivalent to the linear independence of $\{f_n\}_{n\in I}$. Nevertheless, an operator $T$ as in \eqref{repr_intro} is not necessarily bounded. 
The following necessary and sufficient condition for the existence  of an operator $T\in \B(\H)$, in the case $I=\N_0$, was found in \cite{Chris_o18}: the kernel of the synthesis operator of $\{f_n\}_{n\in \N_0}$ is invariant under right-shift. For this reason, a Riesz basis $\{f_n\}_{n\in \N_0}$ is always represented by a bounded operator, {\it whatever} the ordering of $\{f_n\}_{n\in \N_0}$ is. On the other hand, a linearly independent frame $\{f_n\}_{n\in \N_0}$ with finite positive excess is not represented by a bounded operator {\it whatever} the ordering of $\{f_n\}_{n\in \N_0}$ is. \\
The case $I=\Z$ is analogous \cite{Chris_o17_Z}: a linearly independent frame is generated by an operator in $GL(\H)$ if and only if the kernel of the synthesis operator is both left and right shift invariant. The similar conclusions about Riesz bases and frames with finite excess hold. 
When we assume that the operator is just bounded (or it has a bounded inverse), then the equivalence is valid with shift only on the right (or on the left).   

A class of frames widely used in applications is constituted by Gabor frames, see \cite{Chris,FeiStr,Groc,Heil}. In this paper, we consider Gabor frames for the space $L^2(\R)$ having the form
$$\G(g,a,b)=\{e^{2\pi i m b x}g(x-na)\}_{m,n\in \Z},$$
with some window $g\in L^2(\R)$ and $a,b>0$. 

Motivated by the fact that a Gabor frame 
is linearly independent and it has infinite excess if overcomplete, the following question was posed by Christensen and Hasannasab  \cite{Chris_o17_sampta}: does there exists an ordering over $I$ of an overcomplete Gabor frame such that \eqref{repr_intro} holds with an operator $T\in\B(L^2(\R))$? Note that $\G(g,a,b)$ is actually made of iterations of two unitary operators (translation and modulation).  
Coming back to the question, a complete negative answer was found in \cite{Chris_o18_char} for the case $I=\N_0$. More precisely, any norm-bounded overcomplete frame with any indexing is not an orbit over $\N_0$ of an operator in $\B(\H)$.

In this paper we deal with the open question for $I=\Z$ under the additional condition $T\in GL(L^2(\R))$. We show that if $\G(g,a,b)$ is an overcomplete Gabor frame, then there is no ordering $\{f_n\}_{n\in \Z}$ of $\G(g,a,b)$ such that \eqref{repr_intro} holds with $T\in GL(L^2(\R))$ if at least one of the following conditions is satisfied:
\begin{enumerate}
	\item[(i)] the window $g$ has support in a finite interval of length $a$ (Corollary  \ref{cor_Gabor_1});
	\item[(ii)] $ab\in \R\backslash\Q$ (Theorem \ref{th_main2}).
\end{enumerate}
Roughly speaking, the motivation is that `there are too many elements in $\G(g,a,b)$'. These results make use of Theorem \ref{th_char_GL_Z} about the following new characterization:
a frame $\{f_n\}_{n\in \Z}$ of $\H$ is an orbit of an operator in $GL(\H)$ if and only if $\pin{f_i}{h_j}=\pin{f_{i+1}}{h_{j+1}}$ for all $i,j\in \Z$, where $\{h_n\}_{n\in \Z}$ is the canonical dual of $\{f_n\}_{n\in \Z}$. When $\{f_n\}_{n\in \Z}$ is tight this condition is simplified because, as known,  $\{f_n\}_{n\in \Z}$ coincides (up to a constant) with its  canonical dual, and what is more is that if a tight frame is generated by an operator $T$ in $GL(\H)$ over $\Z$, then $T$ is necessarily unitary. 

More generally, a frame whose canonical tight frame is an infinite union of frame sequences of orthogonal pairwise subspaces is not generated over $\Z$ by an operator in $GL(\H)$ (Theorem \ref{th_main1}).  This includes also particular  wavelet frames.

We further prove that if a frame is an orbit over $\Z$ of a normal operator $T$ in $GL(\H)$, then $T$ is unitary but not self-adjoint (Corollary \ref{cor_normal}).

\vspace*{-0.1cm}

\section{Preliminaries}
\label{sec:prel}

Let $\H$ be a separable Hilbert space with inner product $\pint$ and norm $\nor$. The set of bounded linear operator everywhere defined on $\H$ is denoted by $\B(\H)$. The symbol $GL(\H)$ stands for the set of $T\in \B(\H)$ such that $T:\H\to \H$ is a bijection.

Let $I$ be one of the index set $\N_0=\N \cup \{0\}$ and $\Z$. We denote by $l_2(I)$ the classical Hilbert space of complex sequences indexed by $I$. We define by $\mathcal{T}$ the {\it right-shift operator} on $l_2(\Z)$, $\mathcal{T}\{c_n\}_{n\in \Z}=\{c_{n+1}\}_{n\in \Z}$
for $\{c_n\}_{n\in \Z}\in l_2(\Z)$. 
A sequence $\{f_n\}_{n\in I}\subset \H$ is called a {\it frame} of $\H$ if there exist $0<A,B$ such that 
$$
A\n{f}^2\leq \sum_{n\in I} |\pin{f}{f_n}|^2\leq B \n{f}^2, \qquad \forall f\in \H,
$$
and $A,B$ are called {\it lower} and {\it  upper bounds}, respectively. When we write $\{f_n\}_{n\in I}$ we mean that a precise ordering  (or, with an equivalent word, indexing) over $I$ is considered. Sometimes, we will indicate a frame with one letter only, e.g. $\mathcal{F}$, when we want to stress that a property does not depend on a precise ordering. 
For many aspects the indexing is irrelevant (for example the nature of frame of $\{f_n\}_{n\in I}$) and for others aspects it is important (for instance the operator representation). 

Let $\{f_n\}_{n\in I}$ be a frame of $\H$. The {\it synthesis operator} $U:l_2(I)\to \H$ of $\{f_n\}_{n\in I}$ is defined as 
$$
U\{c_n\}_{n\in \Z}:=\sum_{n\in I} c_n f_n, \qquad \forall \{c_n\}_{n\in \Z}\in l_2
$$
and it is bounded, surjective with kernel $N(U)=\{\{c_n\}\in l_2(I):\sum_{n\in I} c_n f_n=0\}$. 
The {\it frame-operator} $S:\H\to \H$ of $\{f_n\}_{n\in I}$ is defined as 
$$
Sf:=\sum_{n\in I} \pin{f}{f_n} f_n, \qquad \forall f\in \H,
$$
and it is a self-adjoint positive operator. More precisely, $S=UU^*$ and $S\in GL(\H)$. A frame with $N(U)=\{0\}$ is called a {\it Riesz basis}. As it is well-known, the notion of Riesz basis has several equivalent formulations (see \cite[Theorem 7.1.1]{Chris}).  A frame which is not a Riesz basis is called {\it overcomplete}. The {\it excess} of a frame is the number of elements that can be removed yet
leaving a frame. Moreover, the excess of a frame coincides with the dimension of $N(U)$,  see \cite[Lemma 4.1]{BCHL_excess}. \\
Two classical frames associated to $\{f_n\}_{n\in I}$ are the {\it canonical dual frame} $\{h_n\}_{n\in I}:=\{S^{-1}f_n\}_{n\in I}$ and the {\it canonical tight frame} $\{t_n\}_{n\in I}:=\{S^{-\mez}f_n\}_{n\in I}$. 
The importance of the canonical dual is the reconstruction formula $$f=\sum_{n\in I} \pin{f}{h_n}f_n=\sum_{n\in I} \pin{f}{f_n}h_n \qquad \forall f\in \H.$$ 
In particular, span($\{f_n\}_{n\in I}$) is dense in $\H$. 
Clearly, one of $\{f_n\}_{n\in I}$, $\{h_n\}_{n\in I}$ and $\{t_n\}_{n\in I}$ is linearly independent if and only if anyone else is linearly independent. When $\{f_n\}_{n\in I}$ is tight with bound $A$ then $\{t_n\}_{n\in I}=\{\frac{1}{\sqrt{A}}f_n\}_{n\in I}=\{\sqrt{A}h_n\}_{n\in I}$. 

If $\{f_n\}_{n\in I}$ is a frame of $\H$ and $\{f_n\}_{n\in I}=\{T^n f_0\}_{n\in I}$ for some operator $T$ on $\H$ with domain containing span$\{f_n\}_{n\in I}$, then we say that $\{f_n\}_{n\in I}$ is an {\it orbit over $I$ of} $T$ (or, $\{f_n\}_{n\in I}$ {\it is generated by $T$ over $I$}). The operator representation of frames has been studied mainly in \cite{Chris_o17_acha,Chris_o18_char,Chris_o18} for $I=\N$ and in \cite{Chris_o17_Z,Chris_o18_char} for $I=\Z$. 
Almost always in this paper $I$ will be $\Z$. 

We summarize in the following theorem some results contained in 
\cite[Proposition 2.1]{Chris_o17_Z}, \cite[Corollary 4.5]{Chris_o18_char} about the operator representation of frames indexed by $\Z$.

\begin{theo}
	\label{th_nor_T}
	Let $\mathcal{F}:=\{f_n\}_{n\in \Z}$ be a linearly independent frame with lower bound $A$ and upper bound $B$. 
	\begin{enumerate}
		\item There exists a bijective linear operator $T_\mathcal{F}:span\{\mathcal{F}\}\to span\{\mathcal{F}\}$ such that $\{f_n\}_{n\in \Z}=\{T_\mathcal{F}^nf_0\}_{n\in \Z}$. 
		\item There exists an operator $T\in GL(\H)$ such that $\{f_n\}_{n\in \Z}=\{T^nf_0\}_{n\in \Z}$ if and only if $\mathcal{T} N(U)\subseteq N(U)$ and $\mathcal{T}^{-1} N(U)\subseteq N(U)$. 		
		Moreover, in this case $T$ is uniquely determined, it is the closure of $T_\mathcal{F}$ and for all $n\in \Z$ 
		\begin{align}
		\label{eq_norm_bound}
		\sqrt{\frac{A}{B}}\n{f} \leq \n{T^n f}\leq \sqrt{\frac{B}{A}}\n{f}, \\
		\sqrt{\frac{A}{B}}\n{f} \leq \n{(T^*)^n f}\leq \sqrt{\frac{B}{A}}\n{f}, \nonumber
		\end{align}
		i.e., $1 \leq \n{T}\leq \sqrt{\frac{B}{A}}$ and $1 \leq \n{T^{-1}}\leq \sqrt{\frac{B}{A}}$.
	\end{enumerate}
\end{theo}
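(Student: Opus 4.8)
The plan is to handle the two parts separately, first building a candidate operator on the algebraic span and then characterising exactly when it extends to an element of $GL(\H)$.

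For part 1, since $\mathcal{F}$ is linearly independent, $\{f_n\}_{n\in\Z}$ is a Hamel basis of $V:=\mathrm{span}\{\mathcal{F}\}$, so I would define $T_{\mathcal{F}}\colon V\to V$ as the unique linear map with $T_{\mathcal{F}}f_n=f_{n+1}$ for every $n$; well-definedness is precisely linear independence. Because $n\mapsto n+1$ merely shifts the indexing, $T_{\mathcal{F}}$ carries this basis onto itself and is therefore a linear bijection of $V$, with inverse $f_n\mapsto f_{n-1}$. An immediate induction then gives $T_{\mathcal{F}}^n f_0=f_n$, i.e. $\{f_n\}_{n\in\Z}=\{T_{\mathcal{F}}^n f_0\}_{n\in\Z}$.

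For the necessity in part 2, assume $T\in GL(\H)$ with $f_n=T^n f_0$. The key is a pair of intertwining identities between the shift on $l_2(\Z)$ and $T$: reindexing the defining sum of $U$ and pulling the bounded operators $T^{\pm1}$ through the convergent series yields $U\mathcal{T}=T^{-1}U$ and $U\mathcal{T}^{-1}=TU$. Applying either identity to a sequence in $N(U)$ and using injectivity of $T^{\pm1}$ gives at once $\mathcal{T}N(U)\sub N(U)$ and $\mathcal{T}^{-1}N(U)\sub N(U)$. For the sufficiency I would run these identities backwards: since $U$ is surjective, every $f\in\H$ equals $Uc$ for some $c\in l_2(\Z)$, and I set $Tf:=U\mathcal{T}^{-1}c$, $T'f:=U\mathcal{T}c$. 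The hypotheses $\mathcal{T}^{\pm1}N(U)\sub N(U)$ are exactly what make these definitions independent of the chosen preimage $c$, so $T,T'$ are well-defined and linear on $\H$. Boundedness follows by taking the minimal-norm preimage $c\in N(U)^\perp$: the restriction $U|_{N(U)^\perp}$ is a bounded bijection onto $\H$, hence a Banach-space isomorphism by the open mapping theorem, and since $\mathcal{T}$ is unitary one gets $\n{Tf}\le\n{U}\,\n{(U|_{N(U)^\perp})^{-1}}\,\n{f}$. A short computation shows $T'T=TT'=I$ (here both invariance conditions are used), so $T\in GL(\H)$, and evaluating on preimages of $f_n$ gives $Tf_n=f_{n+1}$, hence $T^n f_0=f_n$. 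Uniqueness is clear, since any two such operators agree on the dense subspace $V$ and are continuous, and the same remark identifies $T$ with the closure $\overline{T_{\mathcal{F}}}$.

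It remains to prove the norm estimates, which I expect to be the most delicate point, because they are \emph{uniform in} $n$: a crude iteration of a one-step bound would only give the useless factor $(B/A)^{n/2}$. The clean device is the analysis operator $C:=U^*$, $Cf=\{\pin{f}{f_n}\}_n$, which satisfies the exact intertwining $CT^*=\mathcal{T}C$ (both sides send $f$ to $\{\pin{f}{f_{n+1}}\}_n$), hence $C(T^*)^n=\mathcal{T}^n C$ for all $n\in\Z$. Since $\mathcal{T}$ is unitary, $\n{C(T^*)^n f}=\n{Cf}$, and feeding this equality into the frame inequalities $\sqrt{A}\,\n{g}\le\n{Cg}\le\sqrt{B}\,\n{g}$ (with $g=(T^*)^n f$) yields $\sqrt{A/B}\,\n{f}\le\n{(T^*)^n f}\le\sqrt{B/A}\,\n{f}$ for every $n$. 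Passing to adjoints converts the uniform two-sided bounds on $(T^*)^n$ into the same bounds on $T^n$, and the extremal cases $1\le\n{T}$ and $1\le\n{T^{-1}}$ follow by noting that $\n{T}<1$ (resp. $\n{T^{-1}}<1$) would force $\n{T^n f}\to0$ (resp. $\n{T^{-n}f}\to0$), contradicting the lower bound $\sqrt{A/B}\,\n{f}$. The main obstacle is therefore to organise the argument so that the single identity $CT^*=\mathcal{T}C$ carries all the weight, rather than attempting to estimate $T^n$ by iteration.
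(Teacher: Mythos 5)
Your proof is correct; note that the paper itself gives no proof of this theorem, merely citing \cite[Proposition 2.1]{Chris_o17_Z} and \cite[Corollary 4.5]{Chris_o18_char}, and your argument is essentially the standard one from those references: the intertwining $U\mathcal{T}=T^{-1}U$ reduces existence to kernel invariance, the pseudo-inverse $U|_{N(U)^\perp}$ gives boundedness, and the identity $C(T^*)^n=\mathcal{T}^nC$ with $\mathcal{T}$ unitary yields the bounds uniformly in $n$. Your explicit warning against iterating a one-step estimate (which would give only $(B/A)^{n/2}$) correctly identifies the one point where care is needed.
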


\begin{rems}
	\label{rem_th_lect}
\begin{enumerate}
	\item[(i)] The assumption that $\{f_n\}_{n\in \Z}$ is linearly independent is not restrictive. Indeed, this is the only case when $\{f_n\}_{n\in \Z}=\{V^nf_0\}_{n\in \Z}$ for some operator $V$ (see   
	\cite[Proposition 2.1]{Chris_o17_Z}). Regular Gabor frames, that we will consider in the next section, are always linearly independent. The same property is enjoyed also by other frames, like some classes of wavelet frames \cite{Chris}. 
	\item[(ii)] If $T$ is an injective operator in $\B(\H)$ such that $\{f_n\}_{n\in \Z}=\{T^nf_0\}_{n\in \Z}$ is a frame of $\H$ then $T$ belongs necessarily in $GL(\H)$. Indeed, it is surjective because for every $f\in \H$ there exists $\{c_n\}\in l_2(\Z)$ such that $f=\sum_{n\in \Z}c_n f_n=T\sum_{n\in \Z}c_n f_{n-1}$. 
	\item[(iii)] Given a frame $\mathcal{F}:=\{f_n\}_{n\in \Z}$, if there exists $T\in GL(\H)$ such that $\{f_n\}_{n\in \Z}=\{T^nf_0\}_{n\in \Z}$, then $T$ is uniquely determined and it is the closure of $T_\mathcal{F}$.
	\item[(iv)] A well-known consequence of this theorem is that a  Riesz basis (ordered in any way over $\Z$) is the orbit of an operator in $GL(\H)$. 
	Moreover, if an overcomplete linearly independent frame is an orbit of an operator in $GL(\H)$, then it has infinite excess (see \cite[Corollary 2.5]{Chris_o17_Z}). 
	\item[(v)] If a frame of $\H$ is an orbit over $\Z$ of an operator in $GL(\H)$, then it is norm-bounded below. 
	\item[(vi)] From \eqref{eq_norm_bound} follows, more generally, that $1 \leq \n{T^n}\leq \sqrt{\frac{B}{A}}$ for any $n\in \Z$. 
	\item[(vii)] A shift-invariant system $\{\phi(\cdot-na)\}_{n\in\Z}$ where $\phi \in L^2(\R)$, has a natural representation as orbit over $\Z$ of a unitary operator (the translation operator). As known,  $\{\phi(\cdot-na)\}_{n\in\Z}$ is never a frame, but can be a frame sequence (i. e. a frame of its closed span) according to the function $\Phi:\R\to \R$, $\Phi(\gamma)=\sum_{k\in \Z} |\widehat{\phi}(\frac{\gamma+k}{b})|^2$ (see \cite{Chris}). 
	The following example can be seen as a particular case of shift-invariant system. Let  $g\in L^2(0,1)$ such that for some $0<m,M$ one has 
	$m\leq |g(x)|\leq M$ for almost $x\in (0,1)$. Moreover, let $b=\frac{1}{N}$ with $N\in \N$. Thus $\mathcal{E}(g,b):=\{g_m\}_{m\in \Z}$,  with  $g_m(x)=e^{2\pi i m b x} g(x)$, is a frame of $L^2(0,1)$ and $\mathcal{E}(g,b)=\{E_1^m g\}_{m\in \Z}$, where $(E_1 f)(x)=e^{2\pi i b x}f(x)$ for all $f\in L^2(0,1)$. 
	
\end{enumerate}
\end{rems}

It is worth recalling also the following classification found in \cite{Chris_o18_char}: given a frame $\{f_n\}_{n\in \Z}=\{T^nf_0\}_{n\in \Z}$ of $\H$, where $T\in GL(\H)$, there exist a Borel set $\sigma$ of the unit circle $\mathbb{T}$ and a bounded bijection $V:L^2(\sigma)\to \H$ such that $f_n=VM_\sigma^n 1_\sigma$ for all $n\in \Z$ (here $M_\sigma:L^2(\sigma)\to L^2(\sigma)$ is the operator $(M_\sigma h)(z)=z h(z)$ and $1_\sigma$ is the function on $\sigma$ constant to 1). Moreover, $\{f_n\}_{n\in \Z}$ is a Riesz basis if and only if $\sigma=\mathbb{T}$.

We now show other direct consequences of Theorem \ref{th_nor_T}.

\begin{cor}
	\label{cor_tight}
	If $\{f_n\}_{n\in \Z}=\{T^nf_0\}_{n\in \Z}$, with $T\in GL(\H)$, is a tight frame, then $T$ is unitary.
\end{cor}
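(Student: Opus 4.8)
The plan is to read the conclusion directly off the quantitative norm estimates \eqref{eq_norm_bound} of Theorem \ref{th_nor_T}, exploiting the fact that for a tight frame the optimal lower and upper bounds coincide.

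First I would record that the hypotheses put us squarely inside Theorem \ref{th_nor_T}. Since $\{f_n\}_{n\in\Z}$ is assumed to be an orbit of an operator $T\in GL(\H)$, it is in particular linearly independent (by Remark \ref{rem_th_lect}(i), an orbit over $\Z$ is exactly a linearly independent sequence), so the theorem applies and $T$ is the bounded bijection generating the orbit. Because the frame is tight, the frame inequality holds with equality, $\sum_{n\in\Z}|\pin{f}{f_n}|^2=A\n{f}^2$, so we are entitled to take the lower and upper bounds equal, $A=B$.

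Next I would substitute $A=B$ into \eqref{eq_norm_bound}. The two-sided estimate $\sqrt{A/B}\,\n{f}\le\n{T^n f}\le\sqrt{B/A}\,\n{f}$ then collapses to $\n{T^n f}=\n{f}$ for every $n\in\Z$ and every $f\in\H$; taking $n=1$ shows that $T$ is an isometry. Since $T\in GL(\H)$ is by definition a bounded bijection, it is a surjective isometry, hence unitary, which is the desired conclusion.

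The argument presents essentially no obstacle: it is a one-line consequence of bounds already in hand. The only points deserving a moment's care are the two preliminary justifications, namely that the orbit hypothesis already forces linear independence (so that Theorem \ref{th_nor_T} is genuinely available), and that a tight frame permits the choice $A=B$ in the theorem, i.e. that the tight-frame constant serves simultaneously as both bounds in \eqref{eq_norm_bound}.
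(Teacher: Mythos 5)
Your proposal is correct and follows exactly the paper's argument: the paper likewise invokes the norm estimates \eqref{eq_norm_bound} of Theorem \ref{th_nor_T} with $A=B$ to conclude that $T$ is an isometry, and then uses the bijectivity from $T\in GL(\H)$ to upgrade this to unitarity. Your version merely spells out the two preliminary justifications (linear independence of the orbit and the choice $A=B$) that the paper leaves implicit.
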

\begin{proof}
	By Theorem \ref{th_nor_T}, $T$ is an isometry. Since  $T\in GL(\H)$, $T$ is also unitary. 
\end{proof}

Taking into account that for a normal operator $R\in GL(\H)$ the inequalities $m\leq\n{R}\leq M$ imply $\frac{1}{M}\leq\n{R^{-1}}\leq \frac{1}{m}$ we prove the following result.

\begin{cor}
	\label{cor_normal}
	Let $\{f_n\}_{n\in \Z}$ be a frame such that $\{f_n\}_{n\in \Z}=\{T^nf_0\}_{n\in \Z}$ for some $T\in GL(\H)$. The following statements hold.
	\begin{enumerate}
		\item If $T^s$ is normal for some $s\in \Z$, then $T^s$ is unitary.
		\item A power $(T^*)^s$ of $T^*$ with $s\in \Z$ cannot be equal to a polynomial $p(T)$ on $T$. In particular, $T$ cannot be self-adjoint. 
	\end{enumerate}
\end{cor}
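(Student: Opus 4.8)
To prove the first statement, I would set $R:=T^s$ and show it is a surjective isometry. By Theorem~\ref{th_nor_T} and Remark~\ref{rem_th_lect}(vi), the orbit hypothesis gives $1\le\n{T^n}\le\sqrt{\frac BA}$ for all $n\in\Z$, so the powers $R^k=T^{sk}$ satisfy $1\le\n{R^k}\le\sqrt{\frac BA}$ uniformly in $k\in\Z$. Since $R$ is normal (hence so is $R^{-1}$), one has $\n{R^k}=\n R^k$ and $\n{R^{-k}}=\n{R^{-1}}^k$ for $k\ge0$, and combined with the above bounds this forces $\n R=\n{R^{-1}}=1$. Then $\n R\le1$ gives $\n{Rf}\le\n f$, while $\n{R^{-1}}\le1$ gives $\n f=\n{R^{-1}Rf}\le\n{Rf}$, so $R$ is an isometry; being in $GL(\H)$ it is therefore unitary.

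For the second statement I would first reduce to a genuine positive power $s\ge1$: the cases $s\le0$ are either trivial or actually admit such an equality (e.g. a unitary $T$ satisfies $(T^*)^{-1}=T$), while the self-adjointness claim is precisely the instance $s=1$, $p(z)=z$. Assuming $s\ge1$ and $p\ne0$ (as $T^*\in GL(\H)$), the key step is to observe that $(T^*)^s=p(T)$ forces $T^s$ to be normal. Indeed, $p(T)$, being a polynomial in $T$, commutes with $T^s$; substituting $p(T)=(T^*)^s=(T^s)^*$ into $p(T)T^s=T^sp(T)$ yields $(T^s)^*T^s=T^s(T^s)^*$, which is exactly the normality of $T^s$. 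By the first part, $T^s$ is then unitary.

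From the unitarity of $T^s$ I obtain $(T^*)^s=(T^s)^*=(T^s)^{-1}=T^{-s}$, hence $T^sp(T)=I$. Writing $q(z):=z^sp(z)-1$, this reads $q(T)=0$ with $q\ne0$, since $z^sp(z)$ vanishes to order $\ge s\ge1$ at the origin and so $q$ has constant term $-1$. A nontrivial polynomial identity $q(T)=0$ of degree $N$ makes $T^{-1}$ a polynomial in $T$ and confines every power $T^n$ ($n\in\Z$) to $\operatorname{span}\{I,T,\dots,T^{N-1}\}$; applying these operators to $f_0$, all frame elements $f_n=T^nf_0$ would lie in $\operatorname{span}\{f_0,\dots,f_{N-1}\}$, a space of dimension at most $N$. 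This contradicts the linear independence of the infinite orbit $\{f_n\}_{n\in\Z}$ guaranteed by the orbit hypothesis (Remark~\ref{rem_th_lect}(i)). Taking $s=1$ and $p(z)=z$ then gives $T\ne T^*$, so $T$ is not self-adjoint.

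The heart of the argument—and the step I expect to be the main obstacle—is the normality observation in the second statement: one must read the purely algebraic identity $(T^*)^s=p(T)$ as the commutation relation $T^s(T^s)^*=(T^s)^*T^s$. Once that is in place, the first part supplies the analytic input (unitarity) and the remaining finiteness contradiction is elementary. I would also flag explicitly that the hypothesis $s\ge1$ is essential, since for $s\le0$ such equalities genuinely occur.
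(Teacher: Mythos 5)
Your proof is correct and follows essentially the same route as the paper: part \emph{(i)} rests on the uniform bounds $1\le\n{T^{sk}}\le\sqrt{B/A}$ from Remark \ref{rem_th_lect}(vi) together with the multiplicativity of the norm on powers of a normal operator, and part \emph{(ii)} hinges on the same key observation that $(T^*)^s=p(T)$ makes $T^s$ commute with its adjoint, hence normal, hence unitary by the first part. The one point of divergence is the final contradiction in \emph{(ii)}: the paper argues directly that $f_{-s}=T^{-s}f_0=(T^*)^sf_0=p(T)f_0$ is a finite linear combination of $f_0,\dots,f_{\deg p}$, which for $s\ge1$ already violates linear independence; your detour through the polynomial identity $q(T)=0$ and the finite-dimensional confinement of the whole orbit is a little longer but equally valid. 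Your explicit restriction to $s\ge1$ is a genuine (and necessary) sharpening: as you observe, the claim as literally stated for all $s\in\Z$ fails, e.g.\ $s=0$ with $p\equiv1$, or $s=-1$ with $p(z)=z$ when the generator is unitary (such generators exist, see Remark \ref{rem_th_lect}(vii)), and the paper's own argument implicitly requires $-s\notin\{0,\dots,\deg p\}$.
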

\begin{proof}
	Suppose that $T^s$ is normal for some $s\in \Z$. Then also $T^{-s}$ is normal and thus $\n{T^s}$ is necessarily equals to $1$ by Remark \ref{rem_th_lect}(vi) and the observation above. This means that $T^s$ is unitary. 
	To prove (ii), suppose that  $(T^*)^s=p(T)$ for some $s\in \Z$ and some polynomial $p$. Clearly, $T^s$ is normal and then unitary for the first statement. We conclude that $f_{-s}=T^{-s}f_0=(T^*)^sf_0=p(T)f_0\in \text{span}({\{f_n\}}_{n\in \Z})$, i.e. a contradiction to the linear independence of  $\{f_n\}_{n\in \Z}$. 
\end{proof}

Frames are not orbits over $\Z$ of self-adjoint operators in $GL(\H)$, despite the existence of frames which are orbits over $\N_0$ of bounded self-adjoint operators (see \cite[Example 4]{Aldroubi_normal}). 
Unitary operators instead appear as generators over $\Z$ of some class of frames. Thus it useful to have some characterizations of such frames like the following result that involves inner products between elements.

\begin{theo}
	\label{th_unitary}
	A linearly independent frame $\mathcal{F}:=\{f_n\}_{n\in \Z}$ is generated by an unitary operator over $\Z$ if and only if $\pin{f_i}{f_j}=\pin{f_{i+1}}{f_{j+1}}$ for all $i,j\in \Z$.
\end{theo}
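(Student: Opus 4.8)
The plan is to prove the two implications separately, the forward one being immediate and the converse requiring an extension-by-density argument. For the necessity, I would assume $\{f_n\}_{n\in\Z}=\{U^nf_0\}_{n\in\Z}$ with $U$ unitary and simply compute, for arbitrary $i,j\in\Z$,
$\pin{f_{i+1}}{f_{j+1}}=\pin{UU^if_0}{UU^jf_0}=\pin{U^*UU^if_0}{U^jf_0}=\pin{U^if_0}{U^jf_0}=\pin{f_i}{f_j}$,
where the third equality uses $U^*U=I$. This gives the stationarity condition at once.

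For the sufficiency I would start from Theorem \ref{th_nor_T}(1): since $\mathcal{F}$ is linearly independent, there is a bijective linear operator $T_\mathcal{F}:\text{span}\{\mathcal{F}\}\to\text{span}\{\mathcal{F}\}$ with $T_\mathcal{F}^n f_0=f_n$, equivalently $T_\mathcal{F}f_n=f_{n+1}$ for all $n$. The hypothesis then reads $\pin{T_\mathcal{F}f_i}{T_\mathcal{F}f_j}=\pin{f_i}{f_j}$, and by expanding arbitrary finite linear combinations $u=\sum_i a_if_i$ and $v=\sum_j b_jf_j$ bilinearly one sees that $T_\mathcal{F}$ preserves the inner product on all of $\text{span}\{\mathcal{F}\}$, hence is an isometry there. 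Because $\text{span}\{\mathcal{F}\}$ is dense in $\H$ (the reconstruction formula), $T_\mathcal{F}$ extends uniquely to an isometry $T\in\B(\H)$. I would then apply the same reasoning to $T_\mathcal{F}^{-1}$: the hypothesis equally yields $\pin{f_{i-1}}{f_{j-1}}=\pin{f_i}{f_j}$, so $T_\mathcal{F}^{-1}$ (which sends $f_n$ to $f_{n-1}$) is also inner-product preserving and extends to an isometry $S\in\B(\H)$.

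To finish, I would observe that $T_\mathcal{F}^{-1}T_\mathcal{F}=T_\mathcal{F}T_\mathcal{F}^{-1}=I$ on the dense subspace $\text{span}\{\mathcal{F}\}$, so by continuity $ST=TS=I$ on all of $\H$; thus $T$ is a bijective isometry, i.e.\ unitary, with $T^{-1}=S$. Since $Tf_n=T_\mathcal{F}f_n=f_{n+1}$, induction gives $T^nf_0=f_n$ for every $n\in\Z$, exhibiting $\mathcal{F}$ as the orbit over $\Z$ of the unitary $T$.

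The step I expect to be the main obstacle is the density/continuity handling in the converse: one must check carefully that inner-product preservation on the generators extends to all of $\text{span}\{\mathcal{F}\}$, and, crucially, that $T_\mathcal{F}^{-1}$ extends as well, so that the limiting operator genuinely has a two-sided bounded inverse on $\H$ rather than being merely a (possibly non-surjective) isometry. As an alternative to the last stage, one could invoke Theorem \ref{th_nor_T}(2) to place $T$ in $GL(\H)$ and then deduce unitarity from the norm bounds \eqref{eq_norm_bound}, but the direct isometry-extension route is more transparent and self-contained.
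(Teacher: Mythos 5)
Your proposal is correct and follows essentially the same route as the paper: the forward direction by direct computation with $U^*U=I$, and the converse by showing that $T_\mathcal{F}$ and $T_\mathcal{F}^{-1}$ from Theorem \ref{th_nor_T}(1) preserve the inner product (equivalently, the norm) on $\mathrm{span}(\mathcal{F})$ via bilinear expansion of finite sums, then extending by density to a unitary on $\H$. Your version merely makes explicit the surjectivity bookkeeping ($ST=TS=I$ by continuity) that the paper compresses into ``$T_\mathcal{F}$ extends to a unitary operator''.
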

\begin{proof}
	One implication is trivial. For the other one, let $T_\mathcal{F}$ be the operator as in Theorem \ref{th_unitary} and suppose that $\pin{f_i}{f_j}=\pin{f_{i+1}}{f_{j+1}}$ for all $i,j\in \Z$. For $f\in \;$span$(\mathcal{F})$, $f=\sum_{i} c_i f_i$ (finite sum), we have
	\begin{align*}
	\n{T_\mathcal{F} f}^2&=\pin{\sum_{i} c_i f_{i+1}}{\sum_{i} c_i f_{i+1}}=\sum_{i,j} c_i\ol{c_j} \pin{f_{i+1}}{f_{j+1}}\\
	&=	\sum_{i,j} c_i\ol{c_j} \pin{f_{i}}{f_{j}}=\pin{\sum_{i} c_i f_{i}}{\sum_{i} c_i f_{i}}=\n{f}^2.
	\end{align*}
	In an analogous way, $\n{T_\mathcal{F}^{-1} f}=\n{f}$ for all $f\in \;$span$(\mathcal{F})$. Thus $T_\mathcal{F}$ extends to a unitary operator $T$ on $\H$ and $f_n=T^n f_0$ for all $n\in \Z$.
\end{proof}

Note that this theorem is an improvement of Proposition 3.5 of \cite{Chris_o17_Z}. Indeed we do not assume a priori that $\mathcal{F}$ is generated by a bounded operator in Theorem \ref{th_unitary}. Collecting the results above, we now state another characterization of frames which are orbits of operators in $GL(\H)$ over $\Z$.

\begin{theo}
	\label{th_char_GL_Z}
	Let $\{f_n\}_{n\in \Z},\{h_n\}_{n\in \Z}$ and $\{t_n\}_{n\in \Z}$ be a linearly independent frame, its canonical dual and its canonical tight frame. The following statements are equivalent.
	\begin{enumerate}
		\item $\{f_n\}_{n\in \Z}$ is generated by an operator in $GL(\H)$ over $\Z$.
		\item $\{t_n\}_{n\in \Z}$ is generated by an operator in $GL(\H)$ over $\Z$.
		\item $\{t_n\}_{n\in \Z}$ is generated by an unitary operator over $\Z$.
		\item $\pin{t_i}{t_j}=\pin{t_{i+1}}{t_{j+1}}$ for all $i,j\in \Z$.
		\item $\pin{f_i}{h_j}=\pin{f_{i+1}}{h_{j+1}}$ for all $i,j\in \Z$.
	\end{enumerate}
\end{theo}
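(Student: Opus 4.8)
The plan is to prove the chain of equivalences $(i)\Leftrightarrow(ii)\Leftrightarrow(iii)\Leftrightarrow(iv)\Leftrightarrow(v)$, exploiting the fact that the canonical tight frame $\{t_n\}_{n\in\Z}$ shares all the relevant structural data with $\{f_n\}_{n\in\Z}$. The preliminary observations I would record are the identities $t_n=S^{-\mez}f_n$, $f_n=S^{\mez}t_n$ and $h_n=S^{-1}f_n=S^{-\mez}t_n$; moreover, since $\{f_n\}_{n\in\Z}$ is linearly independent, so are $\{h_n\}_{n\in\Z}$ and $\{t_n\}_{n\in\Z}$, as recalled in Section~\ref{sec:prel}. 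This lets me apply Theorem~\ref{th_nor_T}, Corollary~\ref{cor_tight} and Theorem~\ref{th_unitary} to $\{t_n\}_{n\in\Z}$ freely.

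For $(i)\Leftrightarrow(ii)$ the key point is that the two frames have the same synthesis kernel. Writing $U$ and $U_t$ for the synthesis operators of $\{f_n\}_{n\in\Z}$ and $\{t_n\}_{n\in\Z}$, one has $U_t=S^{-\mez}U$, and since $S^{-\mez}\in GL(\H)$ is injective, $N(U_t)=N(U)$. Consequently $N(U)$ is invariant under $\mathcal{T}$ and $\mathcal{T}^{-1}$ if and only if $N(U_t)$ is, and Theorem~\ref{th_nor_T}(ii) then yields $(i)\Leftrightarrow(ii)$. The implication $(ii)\Rightarrow(iii)$ is immediate from Corollary~\ref{cor_tight}: if $\{t_n\}_{n\in\Z}=\{T^nt_0\}_{n\in\Z}$ with $T\in GL(\H)$, then because $\{t_n\}_{n\in\Z}$ is tight the operator $T$ must be unitary; the reverse $(iii)\Rightarrow(ii)$ is trivial, as every unitary operator lies in $GL(\H)$. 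The equivalence $(iii)\Leftrightarrow(iv)$ is exactly Theorem~\ref{th_unitary} applied to the linearly independent frame $\{t_n\}_{n\in\Z}$, for which being generated by a unitary operator over $\Z$ is equivalent to $\pin{t_i}{t_j}=\pin{t_{i+1}}{t_{j+1}}$ for all $i,j$.

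Finally, $(iv)\Leftrightarrow(v)$ reduces to a one-line identity: using that $S^{-\mez}$ is self-adjoint, $\pin{f_i}{h_j}=\pin{f_i}{S^{-1}f_j}=\pin{S^{-\mez}f_i}{S^{-\mez}f_j}=\pin{t_i}{t_j}$ for all $i,j\in\Z$, so conditions $(iv)$ and $(v)$ are literally the same statement. I do not expect any genuine obstacle here, since the whole theorem is assembled from results already established; the only points requiring a little care are the kernel coincidence $N(U_t)=N(U)$ (which drives $(i)\Leftrightarrow(ii)$) and the transfer of linear independence between $\{f_n\}_{n\in\Z}$, $\{h_n\}_{n\in\Z}$ and $\{t_n\}_{n\in\Z}$, both of which are standard and were already noted in the preliminaries.
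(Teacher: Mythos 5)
Your proof is correct and follows essentially the same route as the paper: the same chain of equivalences, with Corollary~\ref{cor_tight} handling $(ii)\Leftrightarrow(iii)$, Theorem~\ref{th_unitary} handling $(iii)\Leftrightarrow(iv)$, and the identity $\pin{t_i}{t_j}=\pin{S^{-\mez}f_i}{S^{-\mez}f_j}=\pin{f_i}{S^{-1}f_j}=\pin{f_i}{h_j}$ for $(iv)\Leftrightarrow(v)$. The only (minor) divergence is in $(i)\Leftrightarrow(ii)$: the paper argues directly that $T$ generates $\{f_n\}_{n\in\Z}$ if and only if the conjugate $S^{-\mez}TS^{\mez}$ generates $\{t_n\}_{n\in\Z}$, and that conjugation by $S^{\mez}\in GL(\H)$ preserves membership in $GL(\H)$; you instead pass through Theorem~\ref{th_nor_T}(ii) via the observation $U_t=S^{-\mez}U$, hence $N(U_t)=N(U)$, so the two-sided shift-invariance conditions coincide. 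Both arguments are valid; the conjugation argument is slightly more direct and has the small bonus of exhibiting the generator of $\{t_n\}_{n\in\Z}$ explicitly, while yours makes transparent that the property depends only on the kernel of the synthesis operator.
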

\begin{proof}
	(i)$\Longleftrightarrow$(ii)  An operator $T$ generates $\{f_n\}_{n\in \Z}$ if and only if $S^{-\mez}TS^{\mez}$ is generates $\{t_n\}_{n\in \Z}$. 
	Therefore, the statement follows by the fact that $T\in GL(\H)$ if and only if $S^{-\mez}TS^{\mez}\in GL(\H)$. \\
	(ii)$\Longleftrightarrow$(iii) The non trivial implication is proved in Corollary \ref{cor_tight}.\\
	(iii)$\Longleftrightarrow$(iv) It is given by Theorem \ref{th_unitary}.\\
	(iv)$\Longleftrightarrow$(v) The statement is clear taking into account that $\pin{t_i}{t_j}=\pin{f_i}{h_j}$ for all $i,j\in \Z$.
\end{proof}

\begin{rems}
	\label{rem_th_char_GL_Z}
	\begin{enumerate}
		\item[(i)] Theorem \ref{th_char_GL_Z} gives another proof of the fact that a Riesz basis $\{f_n\}_{n\in \Z}$ is generated by an operator in $GL(\H)$. 
		Indeed,	$\pin{f_i}{h_j}=1$ if $i=j$ and $\pin{f_i}{h_j}=0$ if $i\neq j$.
		\item[(ii)] We can give also a simple proof that the excess of an overcomplete frame $\{f_n\}_{n\in \Z}$ generated by an operator in $GL(\H)$ is infinite (see Remark \ref{rem_th_lect}(iv)). Indeed, by \cite[Proposition 5.5]{BCHL_excess} the excess of $\{f_n\}_{n\in \Z}$ is equal to $\sum_{n\in \Z} (1-\pin{f_n}{h_n})$ where $\{h_n\}_{n\in \Z}$ is the canonical dual. Hence, Theorem \ref{th_char_GL_Z} says that $\sum_{n\in \Z} (1-\pin{f_n}{h_n})=\infty$.
		\item[(iii)] Point (v) of Theorem \ref{th_char_GL_Z} can be formulate by saying that the cross Gram matrix $(a_{i,j})_{i,j\in \Z}=(\pin{f_i}{h_j})_{i,j\in \Z}$ is a Toeplitz matrix. When $\{f_n\}_{n\in \Z}$ is a tight frame, $(a_{i,j})_{i,j\in \Z}$ is a multiple of the Gram matrix of $\{f_n\}_{n\in \Z}$.
		\item[(iv)] We mention that if $\{f_n\}_{n\in \Z}$ is generated by $T\in GL(\H)$ then $\{h_n\}_{n\in \Z}$ is generated by $(T^*)^{-1}$ (see \cite[Proposition 3.2]{Chris_o17_Z}). This remark can be used to prove  the implication (i)$\Rightarrow$(v) in a different way.
		\item[(v)] In Theorem \ref{th_char_GL_Z} the frame is indexed by $\Z$. In the case $I=\N_0$, the first three statements change as follows. Let $\{f_n\}_{n\in \N_0}$ and $\{t_n\}_{n\in \N_0}$ be a linearly independent frame and its canonical tight frame. Then,  $\{f_n\}_{n\in \N_0}$ is generated by a bounded operator if and only if  $\{t_n\}_{n\in \N_0}$ is generated by a bounded operator if and only if $\{t_n\}_{n\in \N_0}$ is generated by an operator with norm $1$. However, it can be proved that $\{t_n\}_{n\in \N_0}$ is generated by an isometric operator if and only if it is a multiple of an orthonormal basis. Indeed, \cite[Corollary 3.6]{Chris_o18_char} ensure that that if $\{f_n\}_{n\in \N_0}=\{T^nf_0\}_{n\in \N_0}$ is an overcomplete frame with $T\in \B(\H)$, then $\n{f_n}\to 0$. 
		\end{enumerate}
\end{rems}

Now we state some necessary conditions for a frame to have a representation with an operator in $GL(\H)$ as consequence of Theorem \ref{th_char_GL_Z}. In particular, the last assertion follows by \cite[Exercise 3.6]{Chris}.

\begin{cor}
	\label{cor_nec}
	Let $\mathcal{F}$ be a linearly independent frame with canonical dual $\mathcal{D}$ and frame-operator $S$. Suppose that $\mathcal{F}$ has an ordering over $I=\Z$ such that it is generated by an operator in $GL(\H)$. Then 
	\begin{enumerate}
		\item the set $Ip(f):=\{\pin{f}{h}:h\in \mathcal{D}\}$ is invariant with respect to $f\in \mathcal{F}$;
		\item for every $c\in \C$ the cardinality $\mathsf{m}_c(f)=|\{h\in \mathcal{D}: \pin{f}{h}=c\}|$ is invariant with respect to $f\in \mathcal{F}$;
		\item the number $\pin{f}{S^{-1} f}$ is constant for all $f\in \mathcal{F}$.
	\end{enumerate}	
	Suppose that $\mathcal{F}$ is tight with bounds $A=B$. Then 
	\begin{enumerate}
		\item[\emph{(iv)}] $\mathcal{F}$ is norm-constant, i.e., $\n{f}=\n{f'}$ for all $f,f'\in \mathcal{F}$;
		\item[\emph{(v)}] if there is $f\in \mathcal{F}$ such that $\n{f}=\sqrt{B}$, then $\mathcal{F}$ is a multiple of an orthonormal basis.
	\end{enumerate}
\end{cor}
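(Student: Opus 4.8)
The plan is to reduce every assertion to the Toeplitz structure of the cross-Gram matrix supplied by Theorem~\ref{th_char_GL_Z}. Fix the given ordering $\mathcal{F}=\{f_n\}_{n\in\Z}$ with canonical dual $\mathcal{D}=\{h_n\}_{n\in\Z}$. Since $\mathcal{F}$ is generated by an operator in $GL(\H)$, statement (v) of Theorem~\ref{th_char_GL_Z} applies, so $\pin{f_i}{h_j}=\pin{f_{i+1}}{h_{j+1}}$ for all $i,j$; iterating this identity in both directions gives $\pin{f_i}{h_j}=\pin{f_{i+k}}{h_{j+k}}$ for every $k\in\Z$, and taking $k=-j$ shows that the entry depends only on $i-j$. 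I therefore set $\gamma_m:=\pin{f_m}{h_0}$, so that $\pin{f_i}{h_j}=\gamma_{i-j}$ for all $i,j\in\Z$.

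Parts (i) and (ii) then follow by an index substitution. For (i), fixing $f=f_i$ and letting $h=h_j$ range over $\mathcal{D}$ gives $Ip(f_i)=\{\gamma_{i-j}:j\in\Z\}=\{\gamma_k:k\in\Z\}$, which does not depend on $i$. For (ii), since the canonical dual of a linearly independent frame is again linearly independent, the vectors $h_j$ are pairwise distinct, so the count is genuinely over distinct dual vectors and $\mathsf{m}_c(f_i)=|\{j\in\Z:\gamma_{i-j}=c\}|=|\{k\in\Z:\gamma_k=c\}|$ via the bijection $k=i-j$, again independent of $i$. For (iii), I read off the diagonal entry: $\pin{f_i}{S^{-1}f_i}=\pin{f_i}{h_i}=\gamma_0$, a constant in $i$.

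For (iv), I use that a tight frame with bound $A=B$ has $S=A\,\mathrm{Id}$, hence $h_n=\frac1A f_n$; then $\gamma_0=\pin{f_i}{h_i}=\frac1A\n{f_i}^2$, and the constancy of $\gamma_0$ from (iii) forces $\n{f_i}$ to be constant. For (v), I rescale to the Parseval frame $g_n:=\frac{1}{\sqrt{B}}f_n$ and invoke the standard norm bound \cite[Exercise 3.6]{Chris}: each element of a Parseval frame has norm at most $1$, with equality exactly when it is orthogonal to all the others. The hypothesis $\n{f}=\sqrt{B}$ together with the norm-constancy from (iv) yields $\n{g_n}=1$ for every $n$, so every $g_n$ is orthogonal to all the others; the family $\{g_n\}$ is thus an orthonormal system which, being a frame, has dense span and so is an orthonormal basis. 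Hence $\mathcal{F}=\sqrt{B}\,\{g_n\}$ is a multiple of an orthonormal basis.

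I expect no serious obstacle here: the entire content is the passage from the characterization theorem to the Toeplitz form $\pin{f_i}{h_j}=\gamma_{i-j}$, after which (i)--(iii) are bookkeeping with the shift $k=i-j$, (iv) is the specialization $h_n=\frac1A f_n$, and (v) is the cited Parseval norm bound combined with (iv). The only point demanding a little care is ensuring the multiplicity count in (ii) really counts distinct dual vectors, which is why I record the linear independence of $\mathcal{D}$; and in (v) that one must combine the cited exercise with (iv), since a single element of norm $\sqrt{B}$ does not by itself force orthonormality of the whole system.
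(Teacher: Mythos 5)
Your proof is correct and follows exactly the route the paper intends: the paper gives no explicit proof but states the corollary as a direct consequence of Theorem~\ref{th_char_GL_Z} (the Toeplitz property of the cross-Gram matrix, from which (i)--(iii) are immediate bookkeeping and (iv) follows via $h_n=\frac1A f_n$) together with \cite[Exercise 3.6]{Chris} for~(v). Your write-up simply makes these implicit steps explicit, including the careful points about distinctness of the dual vectors and combining the norm bound with~(iv).
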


A special tight frame can be obtained by the union of $N$ orthonormal bases, for arbitrary $N\in \N$. If the obtained frame is linearly independent then Theorem \ref{th_char_GL_Z} applies to it. The frame $\mathcal{E}(\chi_{(0,1)},\frac{1}{N})$ covers this case; indeed, it is a union of $N$ orthonormal bases $\{f_{mN+k}\}_{m\in \Z}$, $k=1,\dots, N$ and it is an orbit of a unitary operator. The precise indexing is important, indeed if we interchange the elements $g_i$ and $g_j$ where $i-j$ is not a multiple of $N$, then with the new indexing $\mathcal{E}(\chi_{(0,1)},\frac{1}{N})$ is no more generated by an operator in $GL(L^2(0,1))$ as consequence of Theorem \ref{th_char_GL_Z}. We may ask if any linearly independent tight frame which is a finite union of orthonormal bases can be always ordered in a way so that it is generated by an operator in $GL(\H)$ (i.e., unitary) over $\Z$. The answer is negative and we will motivate it in Remark \ref{rem_union_orth_2}.

\vspace*{-0.1cm}
\section{Main results}
\label{sec:Gabor}

For $a,b\in \R$ define the operators $T_a$ and $E_b$ on $f\in L^2(\R)$ by 
$$
(T_af)(x)=f(x-a) \;\;\text{ and }\;\; (E_b f)(x)=e^{2 \pi i b x}f(x) \qquad \forall x \in \R.
$$
The operators $T_a$ and $E_b$ are unitary and are called {\it translation by $a$} and {\it modulation by $b$}. The following commutation rule holds: $T_aE_bf=e^{-2\pi i ab}E_bT_a$ (called Weyl relation in \cite{ReedSimon}). The {\it Gabor sequence} generated by $g\in L^2(\R)$, $a,b>0$ is 
$$
\G(g,a,b)=\{g_{m,n}\}_{m,n\in \Z}:=\{E^m_b T^n_a g\}_{m,n\in \Z},
$$
i.e. a  sequence obtained by the iterations of two unitary operators. 
Explicitly, $g_{m,n} (x)=e^{2\pi i bnx }g(x-ma)$ for all $x\in \R$. When $\G(g,a,b)$ is a frame of $L^2(\R)$ we call it a {\it Gabor frame} and we necessarily have $ab\leq 1$. 
Gabor frames have been widely studied in literature and we refer to the books \cite{Chris,Groc,Heil} for an introduction to this topic and, in particular, for the classical results we will use in this section.  

We will always consider $\G(g,a,b)$ a frame in the following. Recall that if  $\G(g,a,b)$ has frame-operator $S$, then its canonical dual frame is $\G(S^{-1}g,a,b)$ and its canonical tight frame is $\G(S^{-\frac{1}{2}}g,a,b)$.

Any Gabor frame $\mathcal{G}(g,a,b)$ is linearly independent therefore for a fixed indexing $\mathcal{G}(g,a,b)=\{f_n\}_{n\in I}$ where $I=\N_0$ or $I=\Z$ there exists an operator $T:\text{span}\{\mathcal{G}(g,a,b)\}\to \text{span}\{\mathcal{G}(g,a,b)\}$ such that $f_n=T^nf_0$ for any $n\in I$. 
In \cite{Chris_o18_char} it was proved that $\mathcal{G}(g,a,b)$ is an orbit over $I=\N$ of a bounded operator if and only if $\mathcal{G}(g,a,b)$ is a Riesz basis. As explained in the Introduction, here we deal with the following problem (\cite{Chris_o17_sampta,Chris_o18_open}).\\

{\bf Question:} Can an overcomplete Gabor frame $\mathcal{G}(g,a,b)$ be indexed over $\Z$ and represented by an operator in $GL(L^2(\R))$? \\

When $\mathcal{G}(g,a,b)$ is a Riesz basis then we know from Theorem \ref{th_nor_T} that it is trivially an orbit over $\Z$ of an operator in $GL(\H)$ for any ordering. As it is well-known, $\mathcal{G}(g,a,b)$ is a Riesz basis if and only if $ab=1$. Therefore, the problem of the representation of $\mathcal{G}(g,a,b)$ is more interesting case of overcompletness (i.e., if $ab<1$). The problem is even more interesting taking into account that, if $\mathcal{G}(g,a,b)$ is overcomplete, then it has infinite excess.

In the aim of finding a counterexample for some choice of $g, a$ and $b$ one could try to verify the necessary conditions of Corollary \ref{cor_nec}. Items (iii), (iv) and (v) do not lead to any conclusion since they are always satisfied by any Gabor frame $\mathcal{G}(g,a,b)$. 
Nevertheless, an analysis of condition (i) will give a negative answer in Theorem \ref{th_main2} assuming $ab\in \R\backslash\Q$. A negative answer in another particular case can be given as application of the following result.

\begin{theo}
	\label{th_main1}
	 	Let $\mathcal{F}$ be a linearly independent overcomplete frame of $\H$ with frame-operator $S$ such that 
	 \begin{enumerate}
	 	\item $\mathcal{F}=\cup_{k\in \Z} \mathcal{F}_k$ is an infinite disjoint union of non-empty subsets;
	 	\item $
	 	\pin{f}{S^{-1}f'}=0$ for all $f\in \mathcal{F}_{k_1}$ and 
	 	$f'\in \mathcal{F}_{k_2}$, with $k_1 \neq k_2.$
	 \end{enumerate}	
	 Then, regardless to the ordering over $\Z$,  $\mathcal{F}$ is not generated by an operator in $GL(\H)$.
\end{theo}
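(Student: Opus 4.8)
The plan is to argue by contradiction using the characterization in Theorem~\ref{th_char_GL_Z}. Suppose $\mathcal{F}$ is given some ordering $\{f_n\}_{n\in\Z}$ and is generated by an operator in $GL(\H)$. Writing $h_n=S^{-1}f_n$ for the canonical dual, condition (v) of Theorem~\ref{th_char_GL_Z} tells us that the cross Gram matrix is Toeplitz, i.e.\ there is a sequence $(c_m)_{m\in\Z}$ with $\pin{f_i}{h_j}=c_{i-j}$ for all $i,j\in\Z$. The whole argument will live in the interaction between this Toeplitz structure and the block-orthogonality hypothesis (ii).

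First I would record the combinatorial set-up. The ordering is a bijection $\Z\to\mathcal{F}$, so the partition $\mathcal{F}=\cup_{k\in\Z}\mathcal{F}_k$ pulls back to a partition $\Z=\bigsqcup_{k\in\Z} B_k$ into infinitely many non-empty index sets, where $B_k=\{n\in\Z:f_n\in\mathcal{F}_k\}$. Hypothesis (ii) then reads: $\pin{f_i}{h_j}=\pin{f_i}{S^{-1}f_j}=0$, i.e.\ $c_{i-j}=0$, whenever $i$ and $j$ lie in different blocks. Equivalently, $c_{i-j}\neq 0$ forces $i$ and $j$ into the same block $B_k$.

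The heart of the proof is to produce one nonzero off-diagonal $c_{m_0}$ with $m_0\neq0$ and then run a counting argument. For the first part I would rule out a purely diagonal cross Gram matrix: if $c_m=0$ for every $m\neq0$, then $\pin{f_i}{h_j}=c_0\,\delta_{ij}$, and feeding this into the reconstruction formula $f_i=\sum_j\pin{f_i}{h_j}f_j=c_0 f_i$ (with $f_i\neq0$ by linear independence) gives $c_0=1$; then the excess $\sum_n(1-\pin{f_n}{h_n})=0$ by Remark~\ref{rem_th_char_GL_Z}(ii), so $\{f_n\}_{n\in\Z}$ is a Riesz basis, contradicting overcompleteness. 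Hence fix $m_0\neq0$ with $c_{m_0}\neq0$. For the counting part, taking $j=i-m_0$ shows that $i$ and $i-m_0$ always lie in the same block; iterating, the entire coset $i+m_0\Z$ lies in a single block, so each $B_k$ is a union of cosets of $m_0\Z$. Consequently there are at most $|m_0|$ blocks, contradicting the assumption that $\mathcal{F}=\cup_{k\in\Z}\mathcal{F}_k$ is an infinite union.

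I expect the only delicate point to be the step excluding the diagonal cross Gram matrix (the passage from ``all off-diagonal $c_m$ vanish'' to ``Riesz basis''), where I must invoke linear independence to ensure $f_i\neq0$ and overcompleteness to reach a contradiction. Once a single nonzero off-diagonal is secured, the coset-counting conclusion is entirely elementary.
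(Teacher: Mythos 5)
Your proof is correct and follows essentially the same route as the paper's: the paper works with the Gram matrix of the canonical tight frame, whose entries $\pin{t_i}{t_j}$ coincide with your $\pin{f_i}{h_j}$, secures a nonzero off-diagonal entry from overcompleteness, and then runs the same residue-class (coset) counting to bound the number of non-empty blocks by $|m_0|$. Your justification of the nonzero off-diagonal entry (diagonal cross Gram matrix $\Rightarrow$ biorthogonality $\Rightarrow$ zero excess $\Rightarrow$ Riesz basis) is a valid equivalent of the paper's appeal to the characterization of Riesz bases among tight frames.
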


\no In other words, a linearly independent overcomplete frame whose canonical tight frame is an infinite union of frame sequences of orthogonal pairwise subspaces is not an orbit over $\Z$ of an operator in $GL(\H)$. 

\begin{proof}
	First of all, fix an ordering $\{f_n\}_{n\in \Z}$ of $\mathcal{F}$ and suppose that $\{f_n\}_{n\in \N}$ is generated by an operator in $GL(\H)$. Then the canonical tight frame $\{t_n\}_{n\in \Z}=\{S^{-\mez}f_n\}_{n\in \Z}$ is generated by a unitary operator $T$  and $\pin{t_i}{t_j}=\pin{t_{i+1}}{t_{j+1}}$ for all $i,j\in \Z$ by Theorem \ref{th_char_GL_Z}. \\
	The tight frame $\{t_n\}_{n\in \Z}$ is overcomplete, so we are able to find $k_0\in \Z$ such that $\mathcal{F}_{k_0}$ contains two distinct elements $t,t'$ whit $\pin{t}{t'}\neq 0$ (see \cite[Theorem 7.1.1]{Chris}). Without loss of generality we can suppose that $t=t_0$. There exists $p\in \Z$ such that $t'=T^pt_0=t_{p}$. By induction we prove that $t_{rp}\in \mathcal{F}_{k_0}$ for all $r\in \Z$. Indeed, the statement holds for $r=0,1$ and since, for $r\geq 1$, $\pin{t_{rp}}{t_{(r+1)p}}=\pin{t_0}{t_{p}}\neq 0$, if $t_{rp}\in \mathcal{F}_{k_0}$ then $t_{(r+1)p}\in \mathcal{F}_{k_0}$ necessarily. In the same way we prove for the statement for $r<0$. \\
	Taking into account that for $j=1,\dots,p-1$, we have  $\pin{f_{rp+j}}{f_{(r+1)p+j}}\neq0$, similarly, there exist $k_1,\dots,$ $k_{p-1}\in \Z$ such that $\{f_{rp+j}\}_{r\in \Z}\subseteq \mathcal{F}_{k_j}$, i.e. $\{f_n\}_{n\in \Z}\subseteq \cup_{j=1}^r \mathcal{F}_{k_j}$. But this leads to a contradiction with (i). 
\end{proof}

\begin{cor}
	\label{cor_Gabor_1}
	Let $\G(g,a,b)$ be an overcomplete Gabor frame with $supp(g)=[c,c+a]$, $c\in \R$. 
	Then, regardless to the ordering over $\Z$, $\G(g,a,b)$ is not generated by an operator in $GL(L^2(\R))$. 
\end{cor}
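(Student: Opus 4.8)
The plan is to deduce this from Theorem \ref{th_main1}, so everything reduces to exhibiting the partition of hypothesis (i) and verifying the orthogonality of hypothesis (ii). Writing $\G(g,a,b)=\{g_{m,n}\}_{m,n\in\Z}$ with $g_{m,n}(x)=e^{2\pi i bnx}g(x-ma)$, I would group the frame elements according to their translation index, setting
$$
\mathcal{F}_k:=\{g_{k,n}:n\in\Z\}, \qquad k\in\Z.
$$
Since $supp(g)=[c,c+a]$, every element of $\mathcal{F}_k$ is supported in the interval $I_k:=[c+ka,c+(k+1)a]$, and these intervals have pairwise disjoint interiors. In particular $\mathcal{F}=\cup_{k\in\Z}\mathcal{F}_k$ is an infinite disjoint union of non-empty sets, which is exactly hypothesis (i) of Theorem \ref{th_main1}.

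The real content is to control the frame operator $S$, and this is where I expect the main work to lie. Because $supp(g)$ is an interval of length $a$, the translates $g(\cdot-ma)$ have mutually disjoint supports, and because $\G(g,a,b)$ is overcomplete we have $ab<1$, i.e.\ $1/b>a$. I would compute $Sf=\sum_{m,n}\pin{f}{g_{m,n}}g_{m,n}$ directly, summing over the modulation index $n$ by Poisson summation: this produces cross-terms involving the product $g(\cdot-ma)\,\overline{g(\cdot-k/b-ma)}$, whose support forces $|k|\le ab<1$, so only $k=0$ survives. This is the classical ``painless'' computation, and it yields that $S$ is the multiplication operator
$$
(Sf)(x)=\frac{1}{b}\Big(\sum_{m\in\Z}|g(x-ma)|^2\Big)f(x).
$$
Being a frame operator, $S$ is bounded above and below, so $G(x):=\tfrac1b\sum_{m}|g(x-ma)|^2$ is bounded away from $0$ and $\infty$, and consequently $S^{-1}$ is the multiplication operator by $1/G$.

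Once $S^{-1}$ is known to be a multiplication operator, hypothesis (ii) becomes transparent: multiplication operators preserve supports, so for $f'\in\mathcal{F}_{k_2}$ the function $S^{-1}f'$ is again supported in $I_{k_2}$. Hence for $f\in\mathcal{F}_{k_1}$ and $f'\in\mathcal{F}_{k_2}$ with $k_1\neq k_2$, the integrand defining $\pin{f}{S^{-1}f'}$ is supported in the null set $I_{k_1}\cap I_{k_2}$, giving $\pin{f}{S^{-1}f'}=0$. This is precisely condition (ii), and Theorem \ref{th_main1} then yields that, regardless of the ordering over $\Z$, $\G(g,a,b)$ is not generated by an operator in $GL(L^2(\R))$.

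The hard part is the identification of $S$ as a multiplication operator; both the disjointness of the translates and the strict inequality $ab<1$ are essential for the vanishing of the $k\neq0$ terms. If I wanted to bypass Poisson summation entirely, an alternative would be to exploit that $S$ commutes with $T_a$ and $E_b$ and to argue the support-preservation of $S^{-1}$ directly through the inner products $\pin{g_{m,n}}{S^{-1}g_{m',n'}}$, but the explicit multiplication-operator description is cleaner and makes hypothesis (ii) immediate.
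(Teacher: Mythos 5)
Your proof is correct and follows essentially the same route as the paper: the paper likewise partitions $\G(g,a,b)$ by translation index and invokes the fact that $S$ is a multiplication operator (citing Gr\"ochenig's Walnut representation rather than rederiving it) so that the canonical dual window keeps the same support, and then applies Theorem \ref{th_main1}. The only nitpick is that the vanishing of the $k\neq 0$ Walnut terms needs just $ab\le 1$ (automatic for any Gabor frame), not the strict inequality $ab<1$.
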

\begin{proof}
	The canonical dual frame of $\G(g,a,b)$ is a Gabor frame $\G(h,a,b)$ with some $h\in L^2(\R)$ and supp$({h})=[c,c+a]$, because the frame operator $S$ of $\G(g,a,b)$ is a multiplication operator (see \cite[Theorem 6.4.1]{Groc}). The decomposition $\G(g,a,b)=\cup_{n\in \Z} \G_n$, where $\G_n=\{g_{m,n}: m\in \Z\}$, satisfies the hypotheses of Theorem \ref{th_main1} and thus we obtain the conclusion. 
\end{proof}

\begin{rem}
	\label{rem_union_orth_2}
	Corollary \ref{cor_Gabor_1} also shows that in a Hilbert space $\H$ and for any integer $N>1$ there exist $N$ orthonormal bases $\mathcal{F}_1, \mathcal{F}_2, \dots, \mathcal{F}_N$ such that $\cup_{i=1}^N\mathcal{F}_i$ is linearly independent but is not an orbit of an operator in $GL(\H)$ for any ordering of $\cup_{i=1}^N\mathcal{F}_i$.	 
\end{rem}

\begin{rem} 
	Another type of structured frames is constituted by the {\it wavelet frames}. To introduce them we need the {\it dilation operator} $D_c$ ($c>0$) defined as $(D_c f)(x)=c^{-\mez}f(\frac{x}{c})$ which is unitary, like the operators $T_a$ and $E_b$. A frame of $L^2(\R)$ is a wavelet frame if it has the form of $\{D_c^jT_a^k\phi\}_{j,k\in \Z}$ for some $\phi\in L^2(\R)$. Moreover, a tight wavelet frame is linearly independent. \\ 
	An example of overcomplete tight wavelet frame is $\{D_2^jT_1^k\psi\}_{j,k\in \Z}$ where $\psi$ has Fourier transform $\hat{\psi}=\chi_{K}$, the characteristic function on  $K=[-\mez,-\frac{1}{4}]\cup[\frac{1}{4},\mez]$ (see \cite[Example 16.3.6]{Chris}). 
	Here we show that $\{\psi_{j,k}\}:=\{D_2^jT_1^k\psi\}_{j,k\in \Z}$ is not a orbit of an operator in $GL(L^2(\R))$ for any indexing over $\Z$ considered. It is simpler to work with the Fourier transform  $\widehat{\psi_{j,k}}=-D_2^{-j}E_1^k\chi_K$ that have support on $[-\frac{1}{2^{j+1}}, -\frac{1}{2^{j+2}}]\cup[\frac{1}{2^{j+2}},\frac{1}{2^{j+1}}]$. We have $\pin{\psi_{j,k}}{\psi_{m,n}}=\pin{\widehat{\psi_{j,k}}}{\widehat{\psi_{m,n}}}=0$ if $j\neq m$, so Theorem \ref{th_main1} with the decomposition $\{\psi_{j,k}\}_{j,k\in \Z}:=\cup_{j\in \Z}\{\psi_{j,k}\}_{k\in \Z}$ makes the conclusion.
\end{rem}

\begin{rem}
	In the statement of Theorem \ref{th_main1} we cannot replace the infinite union with a finite union and obtain the same result. In fact, let us consider the Hilbert space $L^2(\mathbb{T})$, where $\mathbb{T}:=\{z\in \C: |z|=1\}$. For $z\in \mathbb{T}$ we indicate by $\theta$ the unique number in $[0,1)$ such that $z=e^{2\pi i \theta}$. Fix $K\in \N$, $b=\frac{1}{N}$ with $N\in \N$. 
	Define for $m\in \Z$ and $0\leq k\leq K-1$
	$$f_{m,k}(z)=\begin{cases}
	z^{mb} \quad\text{ if $\frac{k}{K}\leq \theta < \frac{k+1}{K}$ }\\
	0\quad\quad \;\;\text{otherwise.}
	\end{cases}
	$$
	The sequence $\cup_{k=0}^{K-1}\{f_{m,k}\}_{m\in \Z}$ is a linearly independent overcomplete tight frame and the sets $\{f_{m,k}\}_{m\in \Z}$, $\{f_{m,k'}\}_{m\in \Z}$ are orthogonal if $k\neq k'$. Moreover, order the frame over $n\in \Z$ as $f_n=f_{m,k}$ where $m\in \Z$ and $0\leq k<K$ are the quotient and the remainder of $n$ divided by $K$, respectively. We have $\{f_n\}_{n\in \Z}=\{T^n f_0\}_{n\in \Z}$, 	where $T$ is the operator on $GL(L^2(\mathbb{T}))$ defined on $f\in L^2(\mathbb{T})$ as
	$$
	(Tf)(z)=\begin{cases}
	z^b f(e^{2\pi i \frac{1}{K}}z)\quad\text{ if $\frac{k}{K}\leq \theta < \frac{k+1}{K}$ }\\
	f(e^{2\pi i \frac{1}{K}}z)\quad\quad \;\text{otherwise.}
	\end{cases}
	$$
\end{rem}

The following last result includes the second class of Gabor frames that we provide as counterexample of our question. Note that there exist $g\in L^2(\R)$ generating Gabor frames for all $a,b>0$ such that $ab<1$ (or even for all $a,b>0$ such that $ab\leq1$), see \cite[Section 11.6]{Chris}.  
As for the first class (Corollary \ref{cor_Gabor_1}) the proof indicates that the Gabor frames contain to many elements, in an appropriate sense.

\begin{theo}
	\label{th_main2}
	Let $\mathcal{G}(g,a,b)$ be a Gabor frame such that $ab\in \R\backslash\Q$. Then $\mathcal{G}(g,a,b)$, with any ordering over $\Z$, is not generated by an operator in $GL(L^2(\R))$. 
\end{theo}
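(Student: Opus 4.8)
The plan is to contradict the necessary condition of Corollary \ref{cor_nec}(i): if $\G(g,a,b)$ were generated over $\Z$ by an operator in $GL(L^2(\R))$ for some ordering, then the set $Ip(f)=\{\pin{f}{h}:h\in \G(S^{-1}g,a,b)\}$ would be the same for every $f\in\G(g,a,b)$, where the canonical dual is the Gabor frame $\G(h,a,b)$ with $h=S^{-1}g$. The first step is to compute the cross inner products. Writing $g_{m,n}=E_b^mT_a^ng$ and $h_{m',n'}=E_b^{m'}T_a^{n'}h$, a direct calculation using the unitarity of $E_b,T_a$ and the Weyl relation $T_aE_b=e^{-2\pi i ab}E_bT_a$ gives
\begin{equation*}
\pin{g_{m,n}}{h_{m',n'}}=e^{2\pi i ab(m-m')n}\,c_{m-m',\,n'-n},\qquad c_{k,l}:=\int_\R e^{2\pi i kbx}g(x)\ol{h(x-la)}\,dx.
\end{equation*}
Consequently $Ip(g_{m,n})=\{e^{2\pi i abkn}c_{k,l}:k,l\in\Z\}$ depends only on the translation index $n$, so the condition to be violated is $Ip(g_{0,n})=Ip(g_{0,0})$ for all $n\in\Z$. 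Moreover, since $\G(h,a,b)$ is itself a frame, $\sum_{k,l\in\Z}|c_{k,l}|^2=\sum_{m',n'}|\pin{g_{0,0}}{h_{m',n'}}|^2<\infty$; hence for each $r>0$ only finitely many $c_{k,l}$ have modulus $r$.

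Next I would split into two cases according to whether the cross-Gram matrix degenerates in the modulation index. Suppose first that $c_{k,l}\neq0$ for some $k\neq0$. Because $ab\in\R\back\Q$, the number $abk$ is irrational, so $\{e^{2\pi i abkn}:n\in\Z\}$ is infinite and therefore $\{e^{2\pi i abkn}c_{k,l}:n\in\Z\}$ is an infinite set of complex numbers all of the fixed modulus $|c_{k,l}|>0$. Each of these numbers lies in $Ip(g_{0,n})$, which by the assumed invariance equals $Ip(g_{0,0})=\{c_{k',l'}:k',l'\in\Z\}$. Thus $Ip(g_{0,0})$ would contain infinitely many elements of a single positive modulus, contradicting the finiteness of that level set established above.

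It remains to treat the degenerate case $c_{k,l}=0$ for all $k\neq0$. Then the displayed formula shows $\pin{g_{m,n}}{S^{-1}g_{m',n'}}=\pin{g_{m,n}}{h_{m',n'}}=0$ whenever $m\neq m'$. Hence the decomposition $\G(g,a,b)=\bigcup_{m\in\Z}\{g_{m,n}:n\in\Z\}$ into modulation blocks is an infinite disjoint union of non-empty sets that are pairwise orthogonal with respect to $S^{-1}$, which is precisely the hypothesis of Theorem \ref{th_main1} (recall that $\G(g,a,b)$ is linearly independent, and overcomplete since $ab<1$). That theorem then yields directly that $\G(g,a,b)$ is not an orbit over $\Z$ of an operator in $GL(L^2(\R))$, for any ordering. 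Combining the two cases completes the proof.

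I expect the main obstacle to be twofold: getting the phase factor $e^{2\pi i ab(m-m')n}$ correct from the Weyl relation (and observing that it involves the translation index $n$ but not $m$, so that $Ip(g_{m,n})$ reduces to an $n$-indexed family), and recognizing that the potentially problematic degenerate case is exactly the configuration covered by Theorem \ref{th_main1}. Once $Ip(g_{0,n})$ is written with its explicit phase, the irrationality argument—an infinite rotation orbit trapped inside a finite modulus level set—is the decisive and clean point.
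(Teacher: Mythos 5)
Your proof is correct and follows essentially the same route as the paper: the same split into the degenerate case (cross inner products vanish for nonzero modulation shift, handled by Theorem \ref{th_main1}) and the non-degenerate case, where the irrational phase $e^{2\pi i abkn}$ forces infinitely many distinct values of a fixed positive modulus into $Ip(g_{0,0})$, contradicting square-summability exactly as in the paper's case (II). The only cosmetic differences are that you write out the full cross-Gram formula explicitly and phrase the final contradiction via finiteness of modulus level sets rather than a divergent sum.
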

\begin{proof}
	Fix an ordering $\mathcal{G}(g,a,b)=\{f_n\}_{n\in \Z}$ and suppose that there exists $T\in GL(L^2(\R))$ such that $\{f_n\}_{n\in \Z}=\{T^nf_0\}_{n\in \Z}$. Let $B$ be an upper bound for the canonical dual $\mathcal{G}(h,a,b)$ of $\mathcal{G}(g,a,b)$. 
We distinguish two cases and prove a contradiction in both situations. 
	\begin{itemize}
		\item[(I)] For every $(m,n)\in \Z\times \Z$ with $m\neq0$, one has $\pin{g}{h_{m,n}}=0$. Acting with the operators $E_b$ and $T_a$ we have also $\pin{g_{r,s}}{h_{m,n}}=0$ for every $(m,n)\in \Z\times \Z$ with $m\neq r$. Therefore we have $\mathcal{G}(g,a,b)=\cup_{m\in \Z} \mathcal{G}_m$, $\mathcal{G}(h,a,b)=\cup_{m\in \Z} \mathcal{J}_m$ where $\mathcal{G}_m=\{g_{m,n}\}_{n\in \Z}$, $\mathcal{J}_m=\{h_{m,n}\}_{n\in \Z}$ and $\mathcal{G}_m \perp \mathcal{J}_{m'}$ if $m\neq m'$. We are now in the case of Theorem \ref{th_main1}, thus $\mathcal{G}(g,a,b)$ is not generated by an operator in $GL(L^2(\R))$. 
		\item[(II)] There exists  $(m,n)\in \Z\times \Z$ with $m\neq 0$ such that $\pin{g}{h_{m,n}}\neq 0$. By the commutation rule, for any $k\in \Z$,   
		$$
		\pin{g_{0,k}}{h_{m,n+k}}=e^{2\pi i mkab} \pin{g}{h_{m,n}}\neq 0.
		$$ 
		Since $mkab$ is never an integer for $k\neq 0$,  $\pin{g_{0,k}}{h_{m,n+k}}$ is a different complex number varying $k\in \Z$. By Corollary \ref{cor_nec}, $\{\pin{g_{0,k}}{h_{m,n+k}}\}_{k\in \Z}\subseteq Ip(g)$ and this leads to the contradiction  that
		\begin{align*}
		B\n{g}^2&\geq \sum_{n,m\in \Z} |\pin{g}{h_{m,n}}|^2\geq \sum_{x\in Ip(g)} |x|^2 \\
		&\geq \sum_{k\in \Z} |e^{-2\pi i mkab} \pin{g}{h_{m,n}}|^2=\infty.
		\qedhere
		\end{align*}
	\end{itemize} 
\end{proof}

\section{Conclusions}

In this paper we gave a negative answer to the problem of orbit representation of overcomplete Gabor frames in two cases. The problem is still open for an overcomplete Gabor frame $\G(g,a,b)$ with rational density $ab$ and with window $g$ whose support is not contained in an interval of length $a$. 

We can actually simplify the question. Indeed, acting with the dilation $D_{a^{-1}}$ we can transform $\G(g,a,b)$ in $\G(D_{a^{-1}} g,1,ab)$ and if $\G(g,a,b)$ can be ordered and written as $\{T^n\gamma\}_{n\in \Z}$ with $T\in GL(L^2(\R))$, then $\G(D_{a^{-1}} g,1,ab)$ can be ordered and written as $\{V^nD_{a^{-1}}\gamma\}_{n\in \Z}$ where $V=D_{a^{-1}}TD_{a}\in GL(L^2(\R))$. Thus the problem about orbit representation can be confined to Gabor frames $\G(g,1,b)$ with $b\in \Q$ (or, equivalently, $\G(g,a,1)$ with $a\in \Q$).

\section*{Acknowledgments}

This work was partially supported by the ``Gruppo Nazionale per l'Analisi Matematica, la Probabilità e le loro Applicazioni'' (GNAMPA – INdAM).

\vspace*{0.2cm}

\vspace*{0.4cm}
\begin{center}
\textsc{Rosario Corso, Dipartimento di Matematica e Informatica} \\
\textsc{Università degli Studi di Palermo, I-90123 Palermo, Italy} \\
{\it E-mail address}: {\bf rosario.corso02@unipa.it}
\end{center}

\end{document}